\documentclass[10pt]{article}
\usepackage{graphicx,color,mathptmx,latexsym,amsmath, amsthm, amscd, amsfonts, amssymb}
 
\setlength{\textwidth}{4.5in} \setlength{\textheight}{7.125in}

 \numberwithin{equation}{section}

\newtheorem{theorem}{Theorem}[section]

\newtheorem{lemma}[theorem]{Lemma}

\newtheorem{corollary}[theorem]{Corollary}

\newtheorem{example}[theorem]{Example}

\makeatletter
\newcommand*{\rom}[1]{\expandafter\@slowromancap\romannumeral #1@}
\makeatletter
\begin{document}
\thispagestyle{plain}

\begin{center}
{\huge  The diameter of  proper power graphs  of alternating groups}
\bigskip
\vspace*{1cm} \\

  K. Pourghobadi,  Shahrood University of Technology, \\
 email: kpghobadi@gmail.com
\vspace*{0.5cm} \\
 S.H. Jafari,  Shahrood University of Technology,  \\
 email:shjafari55@gmail.com
\end{center}


\begin{abstract}
The power graph ${\mathcal{P}}(G)$ of  finite group $G$ is a simple graph whose vertex set is $G$ and two distinct elements $ \alpha $ and  $\beta$ are adjacent if and only if one of them is a power of the other. The  proper power graph of $  G $ denoted by ${\mathcal{P}}^*(G)$  is a graph which is  obtained by deleting the identity vertex  (the identity element of $G$).  
In this paper, we improve the diameter bound of ${\mathcal{P}}^*(A_{n})$ for which ${\mathcal{P}}^*(A_n)$ is connected.
  We show that  $6 \leq  diam({\mathcal{P}}^*(A_n)) \leq  11$ for $n \geq  51$. We also describe a number of short paths in these power graphs.
\end{abstract}

\textbf{Keywords:} Alternating groups, Diameter, Proper power graph.

\textbf{MSC(2010):}    20D06, 05C25.
\section{Introduction}	
In a graph $\Gamma$, a {\it path} is an alternating sequence of vertices and edges which begins and ends at a pair of vertices such that consecutive terms are incident. 
  When every pair of vertices are joined by a path, the graph $\Gamma$ is {\it connected}. 
     The number of edges in the path is the {\it length} of a path.  
 The length of the shortest path connecting  between two vertices $x$ and $y$ in a connected graph  is  assigned  to as its the {\it distance},
  which is denoted by    $d(x, y)$. 
The maximum distance between all pairs of vertices of $\Gamma$ is  assigned to as its  the {\it diameter} of a connected graph $\Gamma$, which is denoted by $diam(\Gamma)$.  \\ 
In  a group $G$, the order of $x$ in $G$, denoted $o(x)$, is the smallest positive integer such that $x^{o(x)}=1$, where $1$ is the identity element of $G$.
 The {\it power graph} of a group $G$ is the simple graph ${\mathcal{P}} (G)$, with vertex-set $G$ and vertices $x$ and $y$ are {\it adjacent}, denoted $x \sim y$, if and only if $x \neq y$ and either $y = x^m$ or $x = y^m$ for some positive integer $m$. The identity element $1$ of a finite group $G$ is adjacent to every other vertex $x$ since $x^{o(x)} = 1$.  The {\it proper power graph} of  $G$, denoted ${\mathcal{P}}^*(G)$, is the graph obtained from ${\mathcal{P}}(G)$ by deleting the vertex $1$. 
 While the power graph of any group is connected, the proper power graph may not be.\\
     Kelarev and Quinn \cite{a7} introduced  the power digraphs for semigroups. 
      Chakrabarty et al. \cite{c1}  defined the (undirected) power graphs of  semigroups.   
 Chattopadhyay and  Panigrahi  \cite{c2} considered   connectivity and planarity of power graphs of finite cyclic, dihedral and dicyclic groups.
    Moghaddamfar et al.  \cite{m1} studied some properties of the power
graph ${\mathcal{P}}(G)$ and the subgraph ${\mathcal{P}}^*(G)$.
  In  \cite{j1},  Jafari   investigated connectivity, diameter and clique number of proper power graphs.
  In \cite{a3},  Doostabadi and Farrokhi addressed  the connectedness and diameter of the proper power graph of a alternating group.

\begin{theorem} (\cite{a3}, Theorem 4.7.)
 Let $G = A_n$ be an alternating group $(n \geq 3)$.   If  $n$, $n - 1$,  $n - 2$, $n /2$, $(n - 1)/2$, $(n - 2)/2$ are not primes, then ${\mathcal{P}}^*(G)$ is connected and $diam( {\mathcal{P}}^*(G))\leq 22$.
 \,\label{le:1.1}
\end{theorem}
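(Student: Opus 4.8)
The plan is to reduce the whole problem to estimating distances between single cycles, and then to bound those distances by a fixed number of short ``bridges''. The first reduction is immediate: for any $\alpha \neq 1$, choose a maximal cyclic subgroup $C = \langle g \rangle$ containing $\langle \alpha \rangle$. Then $\alpha$ is a power of $g$, so $\alpha \sim g$ and $d(\alpha, g) \le 1$. Hence it suffices to bound distances between generators of maximal cyclic subgroups. Writing such a generator as $g = c_1 \cdots c_r$, a product of disjoint cycles of lengths $\ell_1, \dots, \ell_r$, suppose some $\ell_i$ is coprime to $\prod_{j \neq i} \ell_j$. Then $g^{\,\prod_{j \neq i} \ell_j}$ is a single $\ell_i$-cycle lying in $\langle g \rangle$, hence adjacent to $g$; since this cycle lies in $A_n$, the length $\ell_i$ is forced to be odd. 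Thus every vertex is within distance $2$ of a single odd cycle, except for the exceptional generators all of whose cycle lengths share a common factor (products of equal-length cycles, e.g. two $p$-cycles). These exceptional elements are exactly the ones constrained by the ``half'' hypotheses and must be handled separately.

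The second ingredient is a \emph{bridging lemma}. If $\sigma$ is an $a$-cycle and $\rho$ a $c$-cycle on disjoint supports with $a, c$ odd and $\gcd(a,c) = 1$, then $\sigma\rho \in A_n$, and by the Chinese Remainder Theorem there are exponents $k, k'$ with $(\sigma\rho)^{k} = \sigma$ and $(\sigma\rho)^{k'} = \rho$. Hence $\sigma \sim \sigma\rho \sim \rho$, giving $d(\sigma, \rho) \le 2$. So any two odd cycles of coprime lengths sitting on disjoint supports are joined by a path of length $2$ through their product, and iterating this lets one hop between odd cycles at a cost of $2$ per hop, provided there is room on $\{1, \dots, n\}$ to place the intermediate cycle disjointly with a length chosen coprime to the current ones.

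The global routing then connects two arbitrary single odd cycles $\sigma$ (length $a$) and $\tau$ (length $b$) by a bounded bridge sequence: when $a, b$ are small relative to $n$ there is ample room and a couple of bridges suffice. \textbf{The main obstacle is the case of long cycles}, whose support is nearly all of $\{1, \dots, n\}$, leaving no disjoint room for a bridge. This is precisely where the non-primality hypotheses enter. Because $n, n-1, n-2$ are composite, one can select an odd composite length $m$ near the top of the range and factor it nontrivially; passing (distance $1$) from a long cycle to a power of it that is a product of several shorter equal-length cycles frees up structure and re-enters the bridging network. The composite ``halves'' play the analogous role for the exceptional products of two equal-length cycles. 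Carrying out the case analysis (long/long, long/short, and the equal-length hubs) shows that any two single odd cycles are within a bounded number of length-$2$ bridges; adding the two reductions of distance $\le 2$ at each end yields $diam(\mathcal{P}^*(A_n)) \le 22$.

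Finally, connectivity amounts to showing that every prime-order element reaches this network. An element of prime order $p$ is a product of $p$-cycles, and it fails to embed in a larger cyclic subgroup of $A_n$ exactly when no coprime even-permutation cycle can be adjoined for lack of room: that is, for single $p$-cycles with $p \in \{n, n-1, n-2\}$ and for products of two $p$-cycles with $p \in \{n/2, (n-1)/2, (n-2)/2\}$. Ruling out these isolated cliques is exactly the content of the six non-primality conditions; once they hold, every prime-order element lies in some larger cyclic subgroup and hence, by the first reduction, within distance $2$ of a single odd cycle, so $\mathcal{P}^*(A_n)$ is connected. The delicate bookkeeping in the long-cycle routing is where the explicit constant $22$ is extracted.
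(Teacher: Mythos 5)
First, a point of reference: the paper contains no proof of this statement at all --- it is quoted verbatim from Doostabadi and Farrokhi (\cite{a3}, Theorem 4.7) as the starting point to be improved --- so your attempt can only be measured against the toolkit that \cite{a3} and the present paper actually use. Your ``bridging lemma'' is exactly Lemma~\ref{le:1} ($x\sim xy\sim y$ for commuting elements of coprime orders), and passing from a long cycle to a power that splits into shorter cycles is indeed the mechanism running through Section~2. At that level the strategy is aligned.

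However, your central reduction has a genuine gap. You claim that, apart from elements ``constrained by the half hypotheses,'' every vertex is within distance $2$ of a single odd cycle, and you route the entire diameter bound through such cycles. This is false. Take $n=52$, which satisfies all six non-primality hypotheses, and let $\alpha$ be a fixed-point-free product of four $13$-cycles. If $\alpha=g^t$, then every cycle length of $g$ is a multiple of $13$, and the only even partitions of $52$ into multiples of $13$ give cycle types $(13,13,13,13)$, $(26,26)$ and $(39,13)$; in none of these is some cycle length coprime to the product of the others, so no power of any such $g$ is a single cycle, and checking all paths $\alpha\sim z\sim c$ shows $\alpha$ is at distance at least $3$ from every single cycle. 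Elements of this kind --- products of $m\geq 2$ equal $p$-cycles with fewer than three fixed points and $m\neq 2$ --- are not touched by the conditions on $n/2$, $(n-1)/2$, $(n-2)/2$, so your identification of the exceptional set is wrong, and the routing argument (and with it your connectivity argument, which asserts that lying in a larger cyclic subgroup forces distance $\leq 2$ to a single odd cycle) breaks exactly at the hard cases. The actual proofs handle these vertices differently: one interleaves three or five of the $p$-cycles into one long cycle $\sigma$ and uses steps of the form $\beta\sim\sigma\beta_{j_6}^{l}\cdots\beta_{j_{m'}}^{l}\sim\sigma^{3}$ (as in Lemmas~\ref{le:3} and~\ref{le:8} here), then bridges via Lemma~\ref{le:1} on complementary supports, and it is this bookkeeping that yields an explicit constant. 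Two smaller defects: your dichotomy ``some $\ell_i$ coprime to the rest, or all lengths share a common factor'' is not exhaustive (consider cycle lengths $6,10,15$, or $39,13$ in the example above), and the constant $22$ is never derived --- ``a bounded number of length-$2$ bridges'' is asserted where the quantitative content of the theorem should be proved.
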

 
 In this paper, we improve the diameter bound of Theorem ~\ref{le:1.1}. 
  We show that  $6 \leq  diam({\mathcal{P}}^*(A_n)) \leq  11$ for $n >  51$. 
 We also describe a number of short paths in these power graphs.\\
 Throughout this paper, we assume that  $n >51$ is such that  
 $n$, $n - 1$,  $n - 2$, $n /2$, $(n - 1)/2$, $(n - 2)/2$ are not primes.  
 
  \section{Some of short paths}  
We know  every permutation $\alpha\in A_n$ is either a cycle or factorization (which is a product of disjoint cycles). 
 We also know  that taking an appropriate power shows every permutation in $A_n$ is adjacent to a permutation of prime order. 
We determine some of  short paths between elements of prime order the proper power graph of the alternating  group. 
\begin{lemma}
  (\cite{a3}, Lemma $2.1$)
   Let $G$ be a finite group and $x, y \in G\backslash \{1\}$ such that $xy = yx$ and
$gcd(o(x), o(y)) = 1$. Then $x \thicksim xy\thicksim y$. In particular,  $d(x, y)=2$.
\,\label{le:1}
\end{lemma}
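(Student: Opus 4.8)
The plan is to realize $xy$ as a single vertex adjacent to both $x$ and $y$, by showing that each of $x$ and $y$ is a power of $xy$. Write $m = o(x)$ and $k = o(y)$, so that $\gcd(m,k)=1$. Because $x$ and $y$ commute, every power of $xy$ splits as $(xy)^j = x^j y^j$, and this factorization is the engine of the whole argument.

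First I would recover $x$ as a power of $xy$. I seek an exponent $j$ with $(xy)^j = x^j y^j = x$, which is equivalent to the simultaneous conditions $x^j = x$ and $y^j = 1$, i.e. $j \equiv 1 \pmod m$ and $j \equiv 0 \pmod k$. Since $\gcd(m,k)=1$, the Chinese Remainder Theorem supplies such a $j$, whence $x = (xy)^j$ and therefore $x \sim xy$. By the symmetric choice of $j'$ with $j' \equiv 0 \pmod m$ and $j' \equiv 1 \pmod k$ one gets $(xy)^{j'} = y$, so $y \sim xy$. Together these give the walk $x \sim xy \sim y$.

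To confirm that this is a genuine path of edges rather than a degenerate one, I must verify that the three vertices are pairwise distinct. Since $y \neq 1$ we have $xy \neq x$, and since $x \neq 1$ we have $xy \neq y$; moreover $x \neq y$, for if $x = y$ then $o(x) = o(y)$ would divide $\gcd(m,k) = 1$, forcing $x = 1$, a contradiction. Thus $x \sim xy \sim y$ is an honest path of length $2$, giving $d(x,y) \leq 2$.

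Finally, to sharpen this to the claimed equality $d(x,y)=2$, I would exclude the smaller distances. Distance $0$ is ruled out by $x \neq y$ just established. For distance $1$, an edge $x \sim y$ would force one of them to be a power of the other, say $o(y) \mid o(x)$; but with $\gcd(m,k)=1$ and $o(y) > 1$ this is impossible, and symmetrically. Hence no edge joins $x$ to $y$ and $d(x,y) = 2$. The only substantive step is the invocation of the Chinese Remainder Theorem; the remainder is routine bookkeeping about distinctness of the vertices and divisibility of orders, so I do not expect any genuine obstacle here.
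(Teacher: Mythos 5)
Your proof is correct and follows the standard argument for this lemma (which the paper cites from Doostabadi--Farrokhi without reproducing a proof): the Chinese Remainder Theorem recovery of $x$ and $y$ as powers of $xy$, together with the coprimality argument ruling out $x \sim y$, is exactly the expected route. The careful verification that the three vertices are distinct and that $d(x,y) \neq 1$ is a welcome bit of rigor that the original statement leaves implicit.
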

\hspace*{-7mm}
Due to the  Lemma ~\ref{le:1}, the  distance a permutation of prime order from any permutation of its fixed points having a different prime order is $2$.
 For each $\alpha\in A_n$, the  support of $\alpha$ is defined by $S(\alpha)=\{i: \ \alpha(i)\neq i\}$ and show the complement of set $S(\alpha)$ by $S^{c}(\alpha) $.  
\begin{lemma} (\cite{a3}, Lemma 4.3.)
\label{le:whole}
  For $n \geq 10$, there is a path of length at most four in ${\mathcal{P}}^*(A_n)$ between any two $3$-cycles.
  \,\label{le:00} 
\end{lemma}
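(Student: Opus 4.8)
The plan is to connect the two given $3$-cycles to a single common vertex $\sigma$ of order $2$, applying Lemma~\ref{le:1} twice. Write the two $3$-cycles as $\alpha = (a_1\,a_2\,a_3)$ and $\beta = (b_1\,b_2\,b_3)$. A double transposition is even, has order $2$ (coprime to $3$), and commutes with any permutation whose support it avoids, so a natural hub is an element $\sigma = (x_1\,x_2)(x_3\,x_4)$ supported on points fixed by both $\alpha$ and $\beta$. If such a $\sigma$ exists, then $\sigma$ commutes with each of $\alpha,\beta$ and $\gcd(o(\sigma),o(\alpha)) = \gcd(2,3) = 1 = \gcd(o(\sigma),o(\beta))$, so Lemma~\ref{le:1} gives $d(\alpha,\sigma) \le 2$ and $d(\beta,\sigma)\le 2$, and concatenation yields $d(\alpha,\beta)\le 4$.

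The only point that needs checking is that four points outside $S(\alpha)\cup S(\beta)$ are available. Since each $3$-cycle moves exactly three points, $|S(\alpha)\cup S(\beta)| \le 6$, so the number of common fixed points is at least $n-6$. The hypothesis $n \ge 10$ guarantees $n-6 \ge 4$, which is exactly what is needed to choose four distinct points $x_1,x_2,x_3,x_4 \notin S(\alpha)\cup S(\beta)$ and form $\sigma$. This counting step is really the only constraint, and it is where the bound $n \ge 10$ enters; I expect this to be the main obstacle only in the trivial sense that one must confirm the extremal case (disjoint supports, $n=10$) still leaves room.

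With $\sigma$ so chosen, Lemma~\ref{le:1} produces the explicit paths $\alpha \sim \alpha\sigma \sim \sigma$ and $\sigma \sim \beta\sigma \sim \beta$, giving the walk $\alpha \sim \alpha\sigma \sim \sigma \sim \beta\sigma \sim \beta$ of length four. Here $\alpha\sigma$ has order $6$ with $(\alpha\sigma)^3 = \sigma$ and $(\alpha\sigma)^4 = \alpha$, which confirms both adjacencies directly. Finally, the five listed vertices are pairwise distinct whenever $\alpha \ne \beta$, since $\alpha,\beta$ have order $3$ while $\sigma$ has order $2$, and $\alpha\sigma = \beta\sigma$ would force $\alpha = \beta$; in the degenerate cases $\alpha = \beta$ or $\alpha = \beta^{-1}$ the distance is $0$ or $1$. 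Hence $d(\alpha,\beta)\le 4$ for every pair of $3$-cycles when $n \ge 10$.
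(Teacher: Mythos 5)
Your proof is correct and takes essentially the intended approach: the paper itself only quotes this lemma from \cite{a3} (Lemma 4.3) without reproducing a proof, and your construction --- a double transposition supported on four common fixed points, used as a hub via two applications of Lemma~\ref{le:1} --- is exactly the standard argument and the technique this paper uses throughout. The counting step ($n-6\geq 4$ when $n\geq 10$) and your handling of the degenerate cases $\alpha=\beta$ and $\alpha=\beta^{-1}$ are both in order.
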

 
  \begin{lemma}
\label{le:whole}
  Let  $ \alpha, \beta\in A_n $  have respective prime orders $p$ and $q$ such that
     $ \alpha=\alpha_1 \cdots\alpha_m$ and $ \beta=\beta_1  \cdots \beta_{m'} $. If $p q\leq 9$, then $d(\alpha, \beta)\leq 6$.   
  \,\label{le:3}
\end{lemma}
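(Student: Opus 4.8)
The plan is to exploit the very restrictive arithmetic hidden in the hypothesis $pq\le 9$. Since $p,q$ are primes with $pq\le 9$, we must have $p,q\in\{2,3\}$, so each of $\alpha,\beta$ is either a product of disjoint transpositions (order $2$) or a product of disjoint $3$-cycles (order $3$). I would therefore split the argument according to the pair $(o(\alpha),o(\beta))\in\{(2,2),(2,3),(3,2),(3,3)\}$. The engine throughout is Lemma~\ref{le:1}: whenever I can produce an auxiliary element $\gamma$ lying in the common fixed-point set $S^{c}(\alpha)\cap S^{c}(\beta)$ whose order is coprime to the relevant endpoint, that $\gamma$ commutes with the endpoint and Lemma~\ref{le:1} supplies distance $2$ to it.

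In the generous case, where $S(\alpha)\cup S(\beta)$ omits enough points, this is quick. If $\{o(\alpha),o(\beta)\}=\{2,3\}$ with disjoint supports, then $\gcd(o(\alpha),o(\beta))=1$ and Lemma~\ref{le:1} gives $d(\alpha,\beta)=2$ outright. If $o(\alpha)=o(\beta)=2$, I would pick a $3$-cycle $c$ in the common fixed-point set, so that $d(\alpha,c)=2$ and $d(c,\beta)=2$, whence $d(\alpha,\beta)\le 4$. If $o(\alpha)=o(\beta)=3$, I would instead pick an involution $\tau$ (a double transposition, to keep $\tau\in A_n$) in the common fixed-point set, again obtaining $d(\alpha,\tau)=2$ and $d(\tau,\beta)=2$, so $d(\alpha,\beta)\le 4$. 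A crucial point is that each of these paths routes through composite-order intermediates and avoids single $3$-cycles: a product of at least two disjoint $3$-cycles is never adjacent to, and is in fact at distance greater than two from, any single $3$-cycle, so Lemma~\ref{le:00} is directly useful only when $\alpha$ or $\beta$ is itself a $3$-cycle.

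The hard part will be the regime in which $S(\alpha)\cup S(\beta)$ is so large that the common fixed-point set cannot house the bridge ($4$ points for a double transposition, $3$ for a $3$-cycle). Here I would manufacture the bridge from the cycle structure itself: for an order-$3$ element $\alpha=\alpha_1\cdots\alpha_m$ with $m$ large, an involution $\tau$ that swaps two disjoint pairs of the blocks $\alpha_i$ is even and commutes with $\alpha$, giving $d(\alpha,\tau)=2$ with $S(\tau)\subseteq S(\alpha)$; choosing the swapped blocks inside $S(\alpha)\setminus S(\beta)$ keeps $\tau$ disjoint from $\beta$. When even this fails because the two supports nearly coincide, I would invoke the standing hypotheses that $n,n-1,n-2,n/2,(n-1)/2,(n-2)/2$ are composite, which are precisely what guarantee an even permutation of near-full support whose order is divisible by both $2$ and $3$; such an element serves as a high-order hub adjacent to suitable powers. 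Threading $\alpha$ and $\beta$ to such a hub while simultaneously respecting the coprime-commuting hypothesis of Lemma~\ref{le:1} and the parity constraint that every intermediate vertex lie in $A_n$ is the main technical obstacle, and the slack between the generic bound $4$ and the claimed bound $6$ is exactly what absorbs these large-support constructions.
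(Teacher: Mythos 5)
Your reduction to $p,q\in\{2,3\}$ and your treatment of the generous cases (disjoint supports give $d(\alpha,\beta)=2$ via Lemma~\ref{le:1}; a double transposition or a $3$-cycle in the common fixed-point set gives $d(\alpha,\beta)\le 4$) coincide with the paper's Cases 1--3, and your observation that one can reach an involution $\tau$ commuting with $\alpha$ by swapping two disjoint pairs of blocks of $\alpha$ is sound and essentially equivalent to the paper's construction in its case $(1.4)$ (the paper merges $\alpha_1,\dots,\alpha_4$ into two $6$-cycles and passes to the cube). But in the tight regime your proposal has a genuine gap, in two places. First, your prescription ``choosing the swapped blocks inside $S(\alpha)\setminus S(\beta)$'' need not be executable: you need four \emph{whole} $3$-cycles of $\alpha$ disjoint from $S(\beta)$, and even when $|S(\alpha)\setminus S(\beta)|$ is large, every block $\alpha_i$ may meet $S(\beta)$ in a point, leaving no admissible block at all. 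Second, your final fallback --- a ``high-order hub'' of near-full support extracted from the standing composite-ness hypotheses on $n$, $n-1$, $n-2$, etc. --- is not developed into an argument: no such element is constructed, the required adjacencies (which demand genuine root/power relations, not mere commuting) are not verified, and you yourself flag the ``threading'' as an unresolved obstacle. Those hypotheses are in fact not what the paper uses here; in the proof of Lemma~\ref{le:3} the only quantitative input is $n>51$, which forces $m'>8$ (or $m'>16$) whenever $\beta$ has few fixed points.

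The missing idea is the paper's cycle-merging root trick, which resolves exactly the configuration your plan cannot. Once a bridge element of \emph{bounded} support is fixed on one side (a double transposition $x$, a $3$-cycle $c$, or the order-$2$ element $\tau^3$ with $|S(\tau)|=12$), at most boundedly many blocks of $\beta$ meet it; since $m'$ is large, one selects five whole blocks $\beta_{j_1},\dots,\beta_{j_5}$ avoiding it, interleaves them into a single $15$-cycle $\sigma$ with $\sigma^5=\beta_{j_1}\cdots\beta_{j_5}$, and uses the root $w=\sigma\,\beta_{j_6}^{\,l}\cdots\beta_{j_{m'}}^{\,l}$ (with $(\beta_j^{\,l})^5=\beta_j$) to get $\beta\sim w\sim \sigma^3$, where $o(\sigma^3)=5$ is coprime to the bridge's order and $S(\sigma^3)$ avoids its support; Lemma~\ref{le:1} then closes the path at total length $6$. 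In short: the paper always builds the second bridge from whole cycles of $\beta$ avoiding a small fixed set, never from cycles of $\alpha$ avoiding the (possibly huge) set $S(\beta)$, and this ordering of quantifiers is what your proposal lacks. To repair your write-up, replace both the ``blocks inside $S(\alpha)\setminus S(\beta)$'' step and the hub heuristic with this construction.
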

\begin{proof} 
Assume that $ k=n-|S(\alpha)| $, $ k'=n-|S(\beta)| $.  	We consider three cases as follows:\\
 {\bf Case 1:} 
 Let $ p=q=3 $.
 \begin{itemize}
 \item[$(1.1)$] If  $ k, k'\geq 4 $, then there  are   $ j_1, j_2, j_3,  j_4  \notin S(\alpha )$ and  $x=(j_1, j_2) (j_3, j_4)$. 
 Since $\alpha$ and $x$ commute and $gcd(o(\alpha), o(x))=1$, Lemma ~\ref{le:1} implies that $ d(\alpha, x) =2 $.
  If $ S(x)\cap S(\beta)=\emptyset$, by Lemma ~\ref{le:1}, $ d(x, \beta)=2 $. Therefore  $ d(\alpha, \beta)\leq 4 $.  If  $ S(x)\cap S(\beta)\neq\emptyset$ and $ n-|S(\beta)\cup S(x)|\geq5 $, then 
  for distinct $ i_1, i_2, i_3, i_4, i_5\notin S(\beta)\cup S(x)  $, there is a   $5$-cycle $ y=(i_1, i_2, i_3, i_4, i_5)$.  
   Since $y$ and $x$ commute and $gcd(o(y), o(x))=1$, Lemma ~\ref{le:1} implies that $ d(y, x) =2 $.
  Likewise, $d(y, \beta)=2$.
   Therefore $ d(\alpha, \beta)\leq 6 $. 
  Otherwise, $m' >8$ and there are $\beta_{j_6}, \beta_{j_7}, \beta_{j_8}$ and $\beta_{j_9}$ such that $S(x)\subseteq S(\beta_{j_6}\cdots\beta_{j_9})\cup S^c(\beta)$.  
    Say  $\beta_{j_1}=(z_{11}, z_{12}, z_{13}), \beta_{j_2}=(z_{21}, z_{22}, z_{23}),  \cdots$, $ \beta_{j_5}=(z_{51}, z_{52}, z_{53})$, and set     
 $$\sigma=(z_{11}, z_{21}, z_{31}, z_{41}, z_{51}, z_{12}, z_{22}, z_{32 }, z_{42}, z_{52}, z_{13}, z_{23}, z_{33}, z_{43}, z_{53}).$$
  Then $\beta\sim \sigma   \beta_{j_{6}}  ^l\cdots \beta_{j_{m'}} ^l\sim\sigma^3 $,   in which $({\beta_j ^l})^5=\beta_j $.
      By Lemma ~\ref{le:1},  $ d( \sigma^3, x) =2 $.   Therefore  $ d(\alpha, \beta)\leq 6  $. 
\item[$(1.2)$]  If $ k\geq 4 $  and $ k' <4 $,  then  $m'> 16$. Since $ k\geq 4 $,  then for distinct $ j_1, j_2, j_3,  j_4  \notin S(\alpha )$,  there is  $x=(j_1, j_2) (j_3, j_4)$ with  $d(\alpha, x) = 2$.
 Since $m'>16$, as above there is a $15$-cycle $\sigma$ such that $S(x)\cap S(\sigma)=\emptyset$ and  $d(\beta, \sigma^3)=2$.
By Lemma ~\ref{le:1},  $ d( \sigma^3, x) =2 $.   Therefore  $ d(\alpha, \beta)\leq 6 $.
\item[$(1.3)$]  If $ k'\geq 4 $  and $ k  <4 $, then  the proof is similar to    $(1.2)$.
\item[$(1.4)$]   If $ k< 4 $  and $ k' <4 $,  then  $m'> 16$ and $m>16$. Let   $\alpha_{ 1}=(r_{11}, r_{12} , r_{13})$, $ \alpha_{ 2}=(r_{21}, r_{22}, r_{23})$, $\alpha_{ 3}=(r_{31}, r_{32}, r_{33})$, $\alpha_{ 4}=(r_{41}, r_{42}, r_{43})$, and set   $\tau=(r_{11}, r_{21}, r_{12}$, $ r_{22}, r_{13}, r_{23})  (r_{31}, r_{41}$, $ r_{32}, r_{42}, r_{33}, r_{43})$.
Then $\alpha\sim \tau  \alpha_{5}  ^l\cdots \alpha_{ {m}} ^l\sim\tau^3 $,   in which $({\alpha_i ^l})^3=\alpha_i $.
There are   $  \beta _{j_1},\cdots,  \beta _{j_{t-1}}$   and $\beta _{j_t}$ such that     $S(\tau)\subseteq S(\beta _{j_1}\cdots\beta _{j_t})\cup S^c(\beta)$. 
  Since $t\leq 12$ and  $m'>16$, there is $\sigma$ in $A_n$ such that $ d(\beta, \sigma)=2$, $o(\sigma)=5$ and $  S(\sigma)\subseteq S(\beta)-S(\beta _{j_1} \cdots \beta _{j_t})$. By Lemma ~\ref{le:1},  $ d( \sigma, \tau^3) =2 $.   Therefore  $ d(\alpha, \beta)\leq 6 $. 
 \end{itemize} 
 {\bf Case 2:}  Let $ p=q=2 $, the proof is similar to  case $ 1 $. \\
  {\bf Case 3:} Let $ p=2 $ and $ q=3 $.
  Assume that $ k \geq 3 $ and $ k'\geq 4 $.  If $ S(\alpha)\cap S(\beta)=\emptyset $, then $ d(\alpha,\beta)=2 $.  
 Suppose  $ S(\alpha)\cap S(\beta)\neq\emptyset $.
  For $n-|S(\alpha)\cup S(\beta)|\geq 5$,  there is a   $5$-cycle $y$ such that $d(\alpha, y)=d(\beta, y)=2$. Thus  $ d(\alpha, \beta)\leq 4 $.\\ For  $n-|S(\alpha)\cup S(\beta)|< 5$, since $k\geq 3$, for distinct $c_1, c_2, c_3\notin S(\alpha)$, the $3$-cycle $c=(c_1, c_2, c_3)$ is at distance $2$ from $\alpha$.
   If $m'\geq 7$, then there are $x$, $\beta_{j_1}$, $\beta_{j_2}$, $\beta_{j_3}$ and $\beta_{j_4}$ such that $S(c)\cap S(\beta_{j_1}\beta_{j_2}\beta_{j_3}\beta_{j_4}) =\emptyset$, $d(x, \beta)=2$ and  $S(x)=S(\beta_{j_1} \cdots \beta_{j_4})$, where $o(x)=2$.  
    By Lemma ~\ref{le:1},   $d(x, c)=2$.  Therefore  $ d(\alpha, \beta)\leq 6 $.\\
 If $m'<7$, then $n-|S(\beta)\cup S(c)|\geq 4$.  Hence for distinct $ j_1, j_2, j_3,  j_4  \notin S(\beta)\cup S(c)$,  there is  $x=(j_1, j_2) (j_3, j_4)$.  By Lemma ~\ref{le:1},   $d(c, x)=d(x,\beta)=2$. Therefore  $ d(\alpha, \beta)\leq 6 $. The remaining cases are similar.\\
   
\end{proof}
\begin{lemma}
\label{le:whole}
  Let  $ \alpha, \beta\in A_n $  have respective prime orders $p$ and $q$ such that
     $ \alpha=\alpha_1 \cdots \alpha_m$ and $ \beta=\beta_1  \cdots \beta_{m'} $, where $p\leq q$.   
 If   $ m'=3$, $n-|S(\beta)|<3$ and  $n-|S(\alpha)|<3 $, then $d(\alpha, \beta)\leq 10$.
     Otherwise, $ d(\alpha, \beta)\leq 8$.
  \,\label{le:8}
\end{lemma}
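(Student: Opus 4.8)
The plan is to connect $\alpha$ and $\beta$ through a bounded chain of Lemma~\ref{le:1} steps, each contributing distance $2$, and to reserve Lemma~\ref{le:00} as a ``hub'' that joins two $3$-cycles in at most four steps. First I would dispose of the small primes: since $pq\le 9$ is already settled by Lemma~\ref{le:3}, I may assume $q\ge 5$, and I record the numerology forced by the standing hypotheses. Writing $k=n-|S(\alpha)|$ and $k'=n-|S(\beta)|$ for the numbers of fixed points, the relations $|S(\alpha)|=mp$ and $|S(\beta)|=m'q$ together with the assumption that none of $n,n-1,n-2,n/2,(n-1)/2,(n-2)/2$ is prime force $m'\le 2\Rightarrow k'\ge 3$ and $m\le 2\Rightarrow k\ge 3$. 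Consequently the only way a factor can fail to have spare room (three or more fixed points, or four or more cycles) is $m'=3,\ k'<3$ for $\beta$, respectively $m=3,\ k<3$ for $\alpha$; this is exactly the dichotomy appearing in the statement.

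In the generic case, where each of $\alpha$ and $\beta$ has spare room, I would reduce both factors to single $3$-cycles in at most two steps and then glue with Lemma~\ref{le:00}. If a factor has at least three fixed points and its order is different from $3$, a $3$-cycle placed on those fixed points is disjoint from it and of coprime order, so Lemma~\ref{le:1} gives distance $2$; the order-$3$ case and the ``spare cycle'' case ($m\ge 4$ or $m'\ge 4$) are handled by the cycle-merging device from the proof of Lemma~\ref{le:3}: merging $r$ cycles of a $q$-element into a single $rq$-cycle $\sigma$ with $\sigma^{r}$ equal to the chosen cycles produces, in two steps, an auxiliary element of order $r$ (with $\gcd(r,q)=1$) supported only on the merged part, which frees the remaining points to host a coprime hub. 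Combining a distance-$2$ reduction on each side with the distance-$4$ bound of Lemma~\ref{le:00} yields $d(\alpha,\beta)\le 2+4+2=8$.

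The hard case is $m'=3,\ k'<3$ together with $k<3$, where $\beta=\beta_1\beta_2\beta_3$ has at most two fixed points and no spare cycle and $\alpha$ likewise has at most two fixed points: there are no free points to carry a disjoint auxiliary permutation and no spare cycle to merge away. Here I would merge all three $q$-cycles of $\beta$ into a single $3q$-cycle $\sigma$ with $\sigma^{3}=\beta$ (so $\beta\sim\sigma$) and pass to $\sigma^{q}$, an element of order $3$; since $q\ge 5$ is odd, $3q$ is odd, $\sigma$ is an even permutation and hence lies in $A_n$, and $\beta\sim\sigma\sim\sigma^{q}$ is a distance-$2$ reduction of $\beta$ to an order-$3$ element. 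I would treat $\alpha$ symmetrically, merging when $m=3$ and otherwise exploiting its cycles or fixed points. The two resulting short-order elements are then joined through an intermediate $3$-cycle, using that a permutation commutes with anything in its centralizer block-structure, so the required Lemma~\ref{le:1} hop need not rely on disjoint supports; the single extra merging step forced on each rigid end, compared with the generic reduction, is what raises the bound to $d(\alpha,\beta)\le 10$.

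The main obstacle is precisely this doubly-rigid configuration, where neither free points nor spare cycles are available and the usual disjoint-support applications of Lemma~\ref{le:1} fail. Pushing the chain through requires the merging construction to simultaneously keep the rigid factor within distance $2$ and manufacture an element of a new, coprime order; a careful verification that every merged cycle is even, so that it lies in $A_n$; and control of the coprimality of the successive orders along the chain, for which $q\ge 5$ and the non-primality of $n,n-1,n-2,n/2,(n-1)/2,(n-2)/2$ are used to guarantee that cycles of the needed lengths exist on $n>51$ points. The bookkeeping that enough points remain for each auxiliary cycle is routine once the case analysis above is in place.
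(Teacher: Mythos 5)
Your scaffolding---dispose of $pq\le 9$ by Lemma~\ref{le:3}, observe that $k<3$ forces $m\ge 3$ (so spare room fails only when $m'=3$, $k'<3$), reduce each side in two steps, then glue---is the paper's, and your merging of $\beta_1\beta_2\beta_3$ into a $3q$-cycle $\sigma$ with $\beta\sim\sigma\sim\sigma^{q}$ (with the parity check) is exactly the paper's $y$. But there is a genuine gap precisely where the bound $10$ must be produced. After the two merges you hold two elements of order $3$, each a product of many $3$-cycles covering all but at most two points, and you propose to join them ``through an intermediate $3$-cycle'' with Lemma~\ref{le:1} hops that ``need not rely on disjoint supports.'' This cannot work: Lemma~\ref{le:1} requires \emph{coprime} orders, and two order-$3$ elements (or an order-$3$ element and a $3$-cycle) are never coprime; commuting via centralizer block structure does not make either a power of a common neighbor. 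Your accounting is also incoherent: ``one extra merging step on each rigid end'' would give $3+4+3$, but no such single odd hop exists in this mechanism. The paper closes the case differently: it applies Lemma~\ref{le:3} to the two order-$3$ reductions $x^{p}$ and $y^{q}$, getting $d(x^p,y^q)\le 6$ and hence $2+6+2=10$---this is exactly why Lemma~\ref{le:3} was proved for $pq\le 9$ including $p=q=3$, with its heavy subcase $k,k'<4$, $m,m'>16$ handled by $15$-cycle merges producing order-$5$ elements on partial supports. You had this tool in hand in your first paragraph but did not deploy it where it is indispensable.

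A secondary defect infects your generic case and leaves the mixed case $m'=3$, $k'<3$, $k\ge 3$ (which the statement requires to yield $8$) uncovered: your ``generic case'' assumes spare room on both sides, while your ``hard case'' assumes $k<3$, so this configuration falls between them. Moreover the plan ``reduce both factors to single $3$-cycles in two steps, then Lemma~\ref{le:00}'' fails whenever $p=3$ (a disjoint $3$-cycle is not of coprime order) or $k<3$ (the merging device yields an element of order $3$, $2$ or $5$ supported on \emph{many} points, not a single $3$-cycle), so Lemma~\ref{le:00} is not available as a universal hub. The paper's substitute, which you gesture at but never supply, is a \emph{second} merge on one side down to an element of order $2$ or $5$ supported on only a few of the $3$-cycles (e.g.\ $\tau$ with $S(\tau)=S(y_{i_1}y_{i_2}y_{i_3}y_{i_4})$, using $q\ge 13$ to avoid $S(c)$, or $(u,v)\sigma^{3}$ when $p=3$), followed by a genuine disjoint-support application of Lemma~\ref{le:1}, giving $2+2+2+2=8$. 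Without that second merge, your chains in every subcase where one side lacks spare room or has order $3$ do not close.
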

\begin{proof} 
  Assume that $k=n-|S(\alpha)|$ and $k'=n-|S(\beta)|$. When  $k<3$, since ${\mathcal{P}}^*(A_n)$ is connected and $o( \alpha )$ is a prime,  $m\geq 3$. Likewise,  when $k'<3$,  $m'\geq 3$.\\
For $ k'<3$ and $m'=3$, 
 let  $\beta_{ 1}=(z_{11}, \cdots, z_{1q})$, $ \beta_{ 2}=(z_{21}, \cdots, z_{2q})$,     $ \beta_{ 3}=(z_{31}, \cdots, z_{3q})$, and set   $y=(z_{11}, z_{21}, z_{31}, z_{12}, z_{22}, z_{32}, \cdots$, $ z_{1q }, z_{2q}, z_{3q} )$. Then $ \beta\thicksim y \thicksim y^q$ and let $y^q=y_1 \cdots y_q $ be a product of  $3$-cycles.\\
  If   $k<3$, we have  $m\geq 3$. As above there is a $3p$-cycle $x$ such that   $ \alpha\thicksim x \thicksim x^p $
  and let $x^p=x_1 \cdots x_p $  be a product of  $3$-cycles. 
   By Lemma ~\ref{le:3}, $d(x^p, y^q)\leq 6$, thus  $  d(\alpha, \beta)\leq 10 $.\\
Let  $k\geq 3$ and $p\neq 3$.  There is a $3$-cycle $c $  with $d(\alpha, c)=2$.
 Since $q\geq 13$, there are  $\tau$, $y_{i_1}$, $y_{i_2}$, $y_{i_3}$ and $y_{i_4}$ such that $S(c)\cap S(y_{i_1}y_{i_2}y_{i_3} y_{i_4})=\emptyset$, $d(y^q, \tau)=2$ and  $S(\tau)=  S(y_{i_1}y_{i_2}y_{i_3} y_{i_4})$, where $o(\tau)=2$. 
By Lemma ~\ref{le:1},  $d(c, \tau)=2$.
 Consequently $d(y^q, c)\leq 4$ and $  d(\alpha, \beta)\leq 8 $.\\
Let  $k\geq 3$ and $p= 3$.   Say   $\alpha_{ 1}=(r_{11}, r_{12} , r_{13})$, $ \alpha_{ 2}=(r_{21}, r_{22}, r_{23})$, and set   $\sigma=(r_{11}, r_{21}, r_{12}, r_{22}, r_{13}, r_{23})$.  Then  for distinct $u, v\notin S(\alpha)$, $\alpha\sim (u, v)\sigma {\alpha_3}^l\cdots {\alpha_m}^l\sim (u, v)\sigma ^3$, in which $({{\alpha_j}^l})^2={\alpha_j} $. 
Since $q\geq 13$, there are  $\tau$, $y_{i_1}$, $\cdots$, $y_{i_4}$ and $y_{i_5}$ such that $S((u, v)\sigma)\cap S(y_{i_1}\cdots y_{i_5}  )=\emptyset$, $d(y^q, \tau)=2$ and $S(\tau)=  S(y_{i_1} \cdots y_{i_5})$, where $o(\tau)=5$.
By Lemma ~\ref{le:1},  $d((u, v)\sigma^3, \tau)=2$.
 Consequently $d(y^q, (u, v)\sigma^3)\leq 4$ and $  d(\alpha, \beta)\leq 8 $.\\
For the rest of the  proof, we consider four cases as follows:\\  
{\bf Case $ (1)$:} Let $ p=q $ and $ p\geq 5$.\\
We know if  $k<3$, then  for $p<11$, $m\geq7$,  for   $p=11$, $m\geq5$ and for  $p\geq 13$, $m\geq4$.
   
\begin{itemize}
\item[$(1.1)$]   
Let $k\geq 3$ and $k'\geq 3$. Then there are cycles $c$ and $c'$ of length $3$ in $A_n$ such that $ d(\alpha, c)=  d(\beta, c') =2 $. 
    By Lemma ~\ref{le:00}, $ d(c, c')\leq 4 $, thus $ d(\alpha, \beta)\leq 8 $. 
\item[$(1.2)$]  Let $k<3$,  $k'<3$. Then   $m'>3$. 
Assume that  $\alpha'=\alpha_4\cdots \alpha_{m }$, $\gamma_1=\beta_4\beta_5\cdots\beta_{m'}$, $\gamma_2=\beta_1\beta_5\cdots\beta_{m'}$,  $\gamma_3=\beta_2\beta_5\cdots\beta_{m'}$ and $\gamma_4=\beta_3\beta_5\cdots\beta_{m'}$. Since $|(S(\alpha')\cap S(\gamma_1)) \cup (S(\alpha')\cap S(\gamma_2))\cup (S(\alpha')\cap S(\gamma_3)) \cup (S(\alpha')\cap S(\gamma_4))|\geq (m-3)p\geq 13$,  there is $\gamma_j$  such that $|(S(\alpha')\cap S(\gamma_j))|\geq 4$. Hence there are $\tau$ and $ \sigma$  in $A_n$ such that $d(\alpha,\tau^p)= d(\beta, \sigma^q)=2$, $S(\tau)=S(\alpha_1\alpha_2\alpha_3)$, $ S(\sigma)=S(\beta)-S(\gamma_j)$,  $o(\tau)=3p$ and $o(\sigma)=3q$.
 Since $n-|S(\tau )\cup S(\sigma )|\geq 4$,  for distinct $ j_1, j_2, j_3,  j_4  \notin S(\tau )\cup S(\sigma )$,  there is  $x=(j_1, j_2) (j_3, j_4)$.  By Lemma ~\ref{le:1},   $d(x, \tau^p )=d(x, \sigma^q)=2$.  Consequently    $d(\alpha,\beta)\leq 8$.  
\item[$(1.3)$]  Let $k<3$ and  $k'\geq 3$.  
Since $k'\geq 3$,  there is  a $3$-cycle $c $   with $d(\beta,c)=2$. 
Since $k<3$, $m\geq 3$. As proved above there is a $3p$-cycle $ \tau$ such that   $  d(\alpha, \tau^p)= 2$  and $S(c)\cap S(\tau)\geq 1$. 
 Hence $n-|S(\tau )\cup S(c)| \geq 4$ and there is  $x $  in $A_n$ such that    $d(x, \tau^p )=d(x, c)=2$ and $o(x)=2$.
Consequently    $d(\tau^p , c)\leq 4$ and $d(\alpha,\beta)\leq 8$.
 \item[$(1.4)$] Let $k\geq 3$ and  $k'< 3$.  The proof is similar to   $(1.3)$.
\end{itemize} 
 
 The proof of the following cases is similar to  the previous case.\\
{\bf Case $ (2)$:}
 Let $ p\neq q $, $ p\geq 5$ and $ q\geq 5 $.  \\
{\bf Case $ (3)$:} Let $ q\geq 5 $ and $ p=3 $.  \\
 {\bf Case $ (4)$:} Let $ q\geq 5$  and $ p=2 $.   
\end{proof}
\section{Diameter bounds}
In this section,  we  consider diameter bounds. Next, we derive a criterion for the upper bound on $diam({\mathcal{P}}^*(A_n))$.
 \begin{corollary} 
 \label{le:whole}
If  ${\mathcal{P}}^*(A_n)$  is connected, then $  diam({\mathcal{P}}^*(A_n))\leq 11$.
 \,\label{le:19}
 \end{corollary}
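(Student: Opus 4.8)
The plan is to reduce an arbitrary pair to a pair of prime-order elements and then invoke Lemma~\ref{le:8}. For $\gamma\in A_n\setminus\{1\}$, if $o(\gamma)$ is not prime I pick a prime $p\mid o(\gamma)$ and set $\gamma'=\gamma^{o(\gamma)/p}$, so that $o(\gamma')=p$ and $\gamma\sim\gamma'$; if $o(\gamma)$ is already prime I set $\gamma'=\gamma$. Thus $d(\gamma,\gamma')\le 1$, with $d(\gamma,\gamma')=0$ precisely when $o(\gamma)$ is prime. Applying this to both $\alpha$ and $\beta$ gives $d(\alpha,\beta)\le d(\alpha',\beta')+2$. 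By Lemma~\ref{le:8}, $d(\alpha',\beta')\le 8$ unless we are in its exceptional configuration, where $d(\alpha',\beta')\le 10$. In the non-exceptional case this already yields $d(\alpha,\beta)\le 10$, so the whole difficulty is the exceptional configuration, in which the naive estimate only gives $d(\alpha,\beta)\le 12$ and I must recover two steps.

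First, if at least one of $\alpha,\beta$ already has prime order, then one reduction step is free and $d(\alpha,\beta)\le 1+10=11$, as required. So I may assume both $\alpha,\beta$ have non-prime order while $(\alpha',\beta')$ is exceptional; relabel so that $o(\beta')=q\ge p=o(\alpha')$, where $\beta'$ is a product of exactly three $q$-cycles with $n-|S(\beta')|<3$. Since $3q\ge n-2>49$ I get $q\ge 17$, so each $q$-cycle is an even permutation. Writing $o(\beta)=q^{a}r$ with $q\nmid r$ and using that $\beta'=\beta^{o(\beta)/q}$ has exactly three $q$-cycles forces $a=1$: if $a\ge 2$, any contributing cycle of $\beta$ would produce at least $q^{a-1}\ge q>3$ of the $q$-cycles. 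A short parity computation, together with the support bound $n-|S(\beta)|\le n-3q\le 2$, then forces $\beta$ into one of exactly two shapes, a single $3q$-cycle, or a product of a $q$-cycle, a $2q$-cycle and one transposition. In particular $o(\beta)$ is divisible by $2$ or by $3$.

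Consequently $\beta$ is adjacent to a power $\beta_0$ of order $2$ or $3$: in the first shape $\beta_0$ has order $3$ and is a product of many $3$-cycles, and in the second $\beta_0$ has order $2$ with at least $q\ge 17$ fixed points. Either way $\beta_0$ escapes the exceptional configuration of Lemma~\ref{le:8}, being neither a product of exactly three cycles nor of full support, as appropriate. I then bound $d(\alpha,\beta)\le d(\alpha,\alpha')+d(\alpha',\beta_0)+d(\beta_0,\beta)$. If $o(\alpha')\le 3$ then $o(\alpha')\,o(\beta_0)\le 9$ and Lemma~\ref{le:3} gives $d(\alpha',\beta_0)\le 6$; otherwise Lemma~\ref{le:8} gives $d(\alpha',\beta_0)\le 8$ unless $\alpha'$ is itself a product of exactly three full-support cycles, in which case the same classification applies to $\alpha$, I replace $\alpha'$ by an order-$2$ or order-$3$ power $\alpha_0$, and Lemma~\ref{le:3} yields $d(\alpha_0,\beta_0)\le 6$. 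In every branch $d(\alpha,\beta)\le 1+8+1=10\le 11$ (indeed often $\le 8$).

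The main obstacle is precisely this exceptional configuration: the naive reduction loses two steps, and recovering them requires the structural and parity classification of the culprits with three full-support cycles, together with the verification that their order-$2$ or order-$3$ substitutes simultaneously avoid the exceptional case on both sides, so that Lemma~\ref{le:3} or the non-exceptional bound of Lemma~\ref{le:8} can close the estimate. The remaining case distinctions (the two shapes, and whether $\alpha'$ must also be dodged) are routine once the classification and the bound $q\ge 17$ are in hand.
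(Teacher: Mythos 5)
Your proof is correct, and it follows the same skeleton as the paper (power both elements down to prime order, apply Lemmas~\ref{le:3} and~\ref{le:8}, with the bound $11$ arising from the mixed prime/composite case via $1+10$), but it diverges at the crux in an interesting way. The paper takes $p,q$ to be the \emph{least} prime factors of $o(x),o(y)$ and then simply asserts that the resulting powers satisfy ``$m'=3$, $n-|S(y^{o(y)/q})|\geq 3$ or $m'\neq 3$,'' i.e.\ that the exceptional configuration of Lemma~\ref{le:8} cannot occur for a power-down of a composite-order element; no justification is given. You instead take an arbitrary prime divisor, confront the exceptional configuration head-on, and repair it by classifying the culprits: if $\beta^{o(\beta)/q}$ is three full-support $q$-cycles with $o(\beta)$ composite, then $q\geq 17$ and your valuation and parity argument correctly forces $\beta$ to be a single $3q$-cycle or a $q$-cycle times a $2q$-cycle times a transposition, whence an adjacent power $\beta_0$ of order $2$ or $3$ lets Lemma~\ref{le:3} (or the non-exceptional branch of Lemma~\ref{le:8}) close the estimate at $\leq 10$ in the both-composite case, matching the paper. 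In effect your classification \emph{proves} the structural fact the paper leaves implicit: with the least-prime-factor choice, an exceptional $\beta^{o(\beta)/q}$ would force $o(\beta)\in\{2q,3q\}$ with $q\geq 17$, contradicting minimality of $q$ --- so had you chosen least primes as the paper does, your own analysis would make the exceptional case vanish outright rather than require the substitution detour. One stylistic caution: your sentence claiming $\beta_0$ ``escapes the exceptional configuration'' is imprecise as stated, since in Lemma~\ref{le:8} the three-cycles condition attaches to the element of larger prime order (here $\alpha'$ when $o(\alpha')\geq 5$), not to $\beta_0$; but you immediately condition correctly on whether $\alpha'$ is a product of three full-support cycles and handle that subcase by the same classification, so there is no actual gap.
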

 \begin{proof}
  Let $ x, y\in A_n $.   If $x $ and $ y $   are elements of prime orders, then  $d(x,y)\leq 10$,  by Lemmas ~\ref{le:3},  ~\ref{le:8}. 
  Suppose $o(y)$  is not prime and   $q$ is the least prime factor of   $o(y)$, thus  $ y\thicksim y^{o(y)/q}$ and let $ y^{o(y)/q}=y_1 \cdots y_{m'}  $ be the factorization of  $y^{o(y)/q}$ into cycles.  Hence        $m'= 3$, $n-|S(y^{o(y)/q})|\geq 3$ or $m'\neq3$.
If   $o(x)$ is not   prime, then $ x\thicksim x^{o(x)/p}$ and let  $x^{o(x)/p}=x_1 \cdots x_m$ be the factorization of  $x^{o(x)/p}$ into cycles, where   $p$ is the least prime factor of   $o(x)$.   Hence   $m= 3$,  $n-|S(x^{o(x)/p})|\geq 3$ or $m \neq3$.  
By Lemmas ~\ref{le:3} and ~\ref{le:8},   $d(x^{o(x)/p},y^{o(y)/q})\leq 8$. Therefore  $ d(x, y)\leq 10 $.
If  $o(x) $ be prime,
 then  $d(x, y^{o(y)/q} )\leq 10 $, by Lemmas ~\ref{le:8} and ~\ref{le:3}. Therefore  $ d(x, y)\leq 11 $. Consequently  
  $diam({\mathcal{P}}^*(A_n))\leq 11$. 
 \end{proof} 
 We need several results, to prove a lower bound of the diameter. 
 \\
\begin{lemma}  (\cite{a5},  Lemma 3.1.)
 Let $G$ be a finite group and  ${\mathcal{P}}^*(G)$ is connected.  
  If $x$ and $ y$ are elements of prime orders and  $\langle x \rangle\neq \langle y \rangle $, then $d(x, y)=2t$ and  there are elements $x_0 =  x,    x_1, \cdots,  x_{2t} = y$ such that $o(x_{2i})$ is prime for $ i \in \{0,  1,  \cdots,  t\} $, $o(x_{2i+1})=o(x_{2i}) o(x_{2i+2})$ for $ i \in \{0,  1,  \cdots,  t-1 \}$, and $x_i$ is adjacent to $x_{i+1}$ for $ i \in \{0,  1,  \cdots,   2t-1\}$, where   $t$ is a positive integer. 
\,\label{le:0}
\end{lemma}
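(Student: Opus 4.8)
The plan is to take an arbitrary geodesic between $x$ and $y$ and reshape it into the claimed canonical form, exploiting the fact that adjacency in ${\mathcal{P}}^*(G)$ records containment of cyclic subgroups. For nonidentity $a,b$, write $a\preceq b$ when $\langle a\rangle\subseteq\langle b\rangle$, i.e.\ $a$ is a power of $b$; then $a\sim b$ precisely when $a\ne b$ and $a\preceq b$ or $b\preceq a$. First I would record the base observation: since $x,y$ have prime order and $\langle x\rangle\neq\langle y\rangle$, they cannot be comparable (a power relation between two prime-order elements forces equal cyclic groups), so $x\not\sim y$ and $d(x,y)\ge 2$, giving $t\ge 1$.

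Next I would fix any shortest path $v_0=x,v_1,\dots,v_d=y$ and analyze the direction of each edge. On a geodesic no edge can have $\langle v_{i-1}\rangle=\langle v_i\rangle$, for then $v_{i-1}$ and $v_i$ would have identical neighbourhoods and one could shortcut through $v_{i+1}$; hence each edge is strictly ascending ($v_{i-1}\prec v_i$) or strictly descending ($v_i\prec v_{i-1}$). Two consecutive edges of the same type would make the outer vertices comparable and again yield a shortcut, so the directions strictly alternate. Because $v_0=x$ has prime order, the first edge cannot descend (a proper, nonidentity power of a prime-order element generates the same subgroup), so it ascends; symmetrically the last edge must descend because $v_d=y$ has prime order. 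Alternation then forces $d$ to be even, $d=2t$, with the even-indexed vertices $v_{2i}$ the local minima, each satisfying $\langle v_{2i}\rangle\subseteq\langle v_{2i-1}\rangle\cap\langle v_{2i+1}\rangle$, and the odd-indexed $v_{2i+1}$ the local maxima.

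With this skeleton in place, I would adjust the path in two stages, using uniqueness of subgroups of each order in a cyclic group. In the first stage, replace each interior local minimum $v_{2i}$ by an element $x_{2i}$ of prime order lying in $\langle v_{2i}\rangle$ (keeping $x_0=x$, $x_{2t}=y$); since $\langle x_{2i}\rangle\subseteq\langle v_{2i}\rangle\subseteq\langle v_{2i\pm1}\rangle$, the new vertex is still a proper (hence distinct) power of both neighbours, so the walk keeps length $2t$ and is therefore again a geodesic. Writing $p_i=o(x_{2i})$, the geodesic property forces $p_i\ne p_{i+1}$: otherwise $x_{2i}$ and $x_{2i+2}$ would both generate the unique order-$p_i$ subgroup of the cyclic group $\langle v_{2i+1}\rangle$, making them equal or adjacent and producing a shortcut. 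In the second stage, replace each local maximum $v_{2i+1}$ by a generator $x_{2i+1}$ of the subgroup of order $p_ip_{i+1}$ of $\langle v_{2i+1}\rangle$ (which exists because the distinct primes $p_i,p_{i+1}$ both divide $o(v_{2i+1})$); this subgroup contains the unique subgroups $\langle x_{2i}\rangle$ and $\langle x_{2i+2}\rangle$, so $x_{2i}$ and $x_{2i+2}$ are powers of $x_{2i+1}$ and thus adjacent to it, while $o(x_{2i+1})=p_ip_{i+1}=o(x_{2i})o(x_{2i+2})$. The resulting sequence $x_0,x_1,\dots,x_{2t}$ is a walk of length $2t=d(x,y)$, hence a shortest path with distinct vertices, and it satisfies every clause of the statement.

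I expect the main obstacle to be the bookkeeping in the edge-direction analysis: proving cleanly that a geodesic must strictly alternate and that primality of the endpoints forces the first edge up and the last edge down, so that $d$ is even. The two-stage replacement is then largely formal, the only real content being the fact $p_i\ne p_{i+1}$, which must be extracted from the no-shortcut property of geodesics rather than assumed, so as to avoid circularity when choosing the order-$p_ip_{i+1}$ local maxima.
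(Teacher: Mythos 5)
Your proof is correct. One point of context first: this paper does not actually prove the lemma --- it is imported verbatim from reference \cite{a5} (Pourghobadi--Jafari, \emph{The diameter of power graphs of symmetric groups}, Lemma 3.1, listed as ``to appear''), so there is no internal proof to compare against. Judged on its own, your argument is complete and is the standard way such structure lemmas for power graphs are established: (i) on a geodesic no edge can join two elements generating the same cyclic subgroup (identical neighbourhoods give a shortcut), so every edge is strictly ascending or descending in the order $\langle a\rangle\subseteq\langle b\rangle$; (ii) two consecutive edges of the same direction make the outer vertices comparable, another shortcut, so directions alternate; (iii) primality of the endpoints forces the first edge up and the last edge down (a proper power of a prime-order element is trivial), whence $d=2t$; (iv) the two-stage replacement --- prime-order elements at the local minima, then generators of the order-$p_ip_{i+1}$ subgroups at the local maxima --- is justified exactly as you say by uniqueness of subgroups of each order in a cyclic group, with $p_i\neq p_{i+1}$ extracted non-circularly from the no-shortcut property (and, for $t=1$, from the hypothesis $\langle x\rangle\neq\langle y\rangle$). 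The only cosmetic slip is in the identical-neighbourhoods step: for the final edge $i=d$ there is no $v_{i+1}$ to shortcut through, and one must route the shortcut through $v_{d-2}$ instead (which exists since $d\geq 2$); the argument is symmetric, so nothing breaks. Your walk-of-length-$d(x,y)$-is-a-path observation correctly disposes of all distinctness bookkeeping after each replacement stage.
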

 
\begin{theorem} ({Chebyshev's theorem} (\cite{d04}, p. 124))
If  $n>1$, then there is always at least one prime $p$,   between $n$ and $2n$.
\end{theorem}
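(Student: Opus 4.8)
The plan is to follow Erd\H{o}s's elementary argument based on the arithmetic of the central binomial coefficient $\binom{2n}{n}$, deriving a contradiction from the assumption that no prime lies in the open interval $(n,2n)$. First I would record two opposing estimates for $N:=\binom{2n}{n}$. On the one hand, since $N$ is the largest of the $2n+1$ summands in $4^{n}=(1+1)^{2n}=\sum_{k=0}^{2n}\binom{2n}{k}$, I obtain the lower bound $N\geq 4^{n}/(2n)$. On the other hand, I would control $N$ through its prime factorisation: writing $R(p)$ for the exponent of a prime $p$ dividing $N$, Legendre's formula gives
\[
R(p)=\sum_{i\geq 1}\left(\left\lfloor \frac{2n}{p^{i}}\right\rfloor-2\left\lfloor \frac{n}{p^{i}}\right\rfloor\right),
\]
in which every summand is $0$ or $1$ and all terms with $p^{i}>2n$ vanish; hence $p^{R(p)}\leq 2n$ for every prime $p$.

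Next I would extract three structural facts from this formula: (i) any prime $p>\sqrt{2n}$ occurs with $R(p)\leq 1$; (ii) no prime $p$ with $\tfrac{2n}{3}<p\leq n$ divides $N$ at all (for $n\geq 3$), since then $\lfloor n/p\rfloor=1$ and $\lfloor 2n/p\rfloor=2$ force the surviving term to cancel while all higher powers exceed $2n$; and (iii) a prime with $n<p<2n$ would divide $N$ exactly once. I would also invoke the primorial bound $\prod_{p\leq x}p\leq 4^{x}$, proved by a short induction on $x$ (the even case being immediate and the odd case $x=2m+1$ handled through the factor $\binom{2m+1}{m}\leq 4^{m}$, which absorbs every prime in $(m+1,2m+1]$). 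Assuming there is no prime in $(n,2n)$, fact (ii) shows $N$ is built only from primes $p\leq\tfrac{2n}{3}$; separating these into the at most $\sqrt{2n}$ primes below $\sqrt{2n}$ (each contributing a prime power $\leq 2n$ by the displayed identity) and the remaining primes (each to the first power, by (i)) yields
\[
N\leq (2n)^{\sqrt{2n}}\cdot\prod_{p\leq 2n/3}p\leq (2n)^{\sqrt{2n}}\cdot 4^{2n/3}.
\]

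Combining this with the lower bound $4^{n}/(2n)\leq N$ gives $4^{n/3}\leq (2n)^{\sqrt{2n}+1}$, which after taking logarithms is a polynomial-versus-linear comparison in $\sqrt{2n}$ and is violated once $n$ exceeds an explicit threshold; this establishes the theorem for all large $n$. The main obstacle is purely quantitative: the analytic inequality closes only beyond a few hundred, so I would finish by verifying the remaining small values directly. For this I would exhibit the short chain of primes $2,3,5,7,13,23,43,83,163,317,631$, in which each term is strictly less than twice its predecessor, so that for every $n$ below the threshold one of these primes lands in $(n,2n)$, completing the proof.
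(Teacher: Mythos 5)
Your proposal is correct, but it is not comparable to anything in the paper, because the paper does not prove this statement at all: Chebyshev's theorem (Bertrand's postulate) is quoted as a classical result with a citation to Derbyshire's book, and is used purely as a black box — namely in the lower-bound argument of Theorem~\ref{le:20}, to produce a prime $p$ with $[n/2]<p<n$ and hence two $p$-cycles at distance at least $6$. What you have written is the standard Erd\H{o}s elementary proof, and its ingredients are all sound: the lower bound $\binom{2n}{n}\geq 4^{n}/(2n)$ (valid after merging the two end terms of the binomial expansion), the consequence $p^{R(p)}\leq 2n$ of Legendre's formula, the exclusion of primes in $(2n/3,\,n]$ for $n\geq 3$ (using $p\geq 3$ so that $p^{2}\geq 3p>2n$), the primorial bound $\prod_{p\leq x}p\leq 4^{x}$ via $\binom{2m+1}{m}\leq 4^{m}$, and the resulting inequality $4^{n/3}\leq (2n)^{\sqrt{2n}+1}$, which indeed fails for all $n$ beyond a threshold near $468$; your Landau chain $2,3,5,7,13,23,43,83,163,317,631$ is checked correctly (each term is prime and less than twice its predecessor) and covers every $n\leq 630$, which comfortably contains the threshold. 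What your route buys is self-containedness by entirely elementary means, whereas the paper's citation is the economical choice given that only the existence statement is needed downstream. The one point you should make fully explicit if this were to be written out: verify numerically that $4^{n/3}>(2n)^{\sqrt{2n}+1}$ for all $n\geq 468$ (monotonicity of the ratio settles all larger $n$), so that the analytic threshold is provably below $631$ and the chain argument genuinely closes the gap.
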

 
\begin{lemma} (\cite{a8}, Theorem 2.)
\label{le:whole}
  Let $\alpha=\alpha_1\cdots\alpha_m$  such that $\alpha_1, \cdots, \alpha_m  $  are cycles of length   $  t $. Then $\langle \alpha_i \rangle \cong \mathbb{Z}_t$,  $  (1\leq i\leq m)$ and $C_{S_{mt}}(\alpha)\cong (\mathbb{Z}_t)^m\rtimes  S_m$,  where
 $C_{S_{mt}}(\alpha)$ is  the   centralizer  of $\alpha$ in $S_{mt}$.
 \,\label{le:46}
 \end{lemma}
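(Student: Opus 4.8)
The first assertion is immediate: each $\alpha_i$ is a cycle of length $t$, so $o(\alpha_i)=t$ and $\langle \alpha_i \rangle \cong \mathbb{Z}_t$ for every $i$. The plan for the centralizer is to exhibit the standard short exact sequence that presents $C_{S_{mt}}(\alpha)$ as a wreath product. First I would record that, since the cycles $\alpha_1,\dots,\alpha_m$ are disjoint, they commute pairwise and their supports partition the $mt$ moved points; hence each $\alpha_i$ commutes with $\alpha$, and $\langle \alpha_1,\dots,\alpha_m\rangle \cong (\mathbb{Z}_t)^m$ sits inside $C_{S_{mt}}(\alpha)$ (its normality will follow from the kernel description below).

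Next I would build a homomorphism $\phi : C_{S_{mt}}(\alpha) \to S_m$ by letting the centralizer act on the set of cycles of $\alpha$. For $g \in C_{S_{mt}}(\alpha)$, conjugation sends the $t$-cycle $\alpha_i$ to the $t$-cycle $g\alpha_i g^{-1}$, supported on $g(S(\alpha_i))$; since $g\alpha g^{-1} = \alpha = \alpha_1\cdots\alpha_m$ and the factorization into disjoint cycles is unique, $g\alpha_i g^{-1}$ must coincide with some $\alpha_{\pi(i)}$. Setting $\phi(g)=\pi$ yields a well-defined homomorphism into $S_m$. Its kernel consists of those $g$ that fix each support $S(\alpha_i)$ setwise while commuting with $\alpha_i$ on it; the key computation is that the centralizer of a single $t$-cycle inside the symmetric group on its $t$-point support is exactly the cyclic group it generates. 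Granting this, $\ker\phi = (\mathbb{Z}_t)^m$, which also confirms normality.

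To obtain a semidirect product rather than merely an extension, I would construct an explicit splitting. Writing $\alpha_i = (a_{i,1},\dots,a_{i,t})$, I define for each $\pi \in S_m$ the permutation $s(\pi)$ by $s(\pi)(a_{i,j}) = a_{\pi(i),j}$. A direct check shows that $s(\pi)$ commutes with $\alpha$, that $\phi(s(\pi)) = \pi$, and that $s$ is a homomorphism, so $\phi$ splits and $S_m$ embeds as a complement to the kernel. Conjugation by $s(\pi)$ carries $\alpha_i$ to $\alpha_{\pi(i)}$, i.e.\ $S_m$ acts on $(\mathbb{Z}_t)^m$ by permuting the factors, which is precisely the wreath-product action. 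Therefore $C_{S_{mt}}(\alpha) \cong (\mathbb{Z}_t)^m \rtimes S_m$.

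I expect the main obstacle to be the kernel computation, namely verifying that nothing beyond the powers of $\alpha_i$ can simultaneously fix $S(\alpha_i)$ setwise and commute with $\alpha_i$, together with the bookkeeping needed to confirm that the section $s$ is genuinely a homomorphism inducing the intended permutation action on the factors (one must fix the left/right conjugation convention so that $s(\pi\pi')=s(\pi)s(\pi')$). As a sanity check on the whole argument, the resulting group has order $t^m\cdot m!$, which agrees with the classical formula for the size of a centralizer in $S_{mt}$.
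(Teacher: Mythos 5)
Your proof is correct and complete. Note that the paper offers no proof of this lemma at all --- it is quoted directly from Lipscomb and Lallement (\cite{a8}, Theorem 2) --- so there is nothing internal to compare against; your argument is the standard one behind that reference: the split exact sequence $1 \to (\mathbb{Z}_t)^m \to C_{S_{mt}}(\alpha) \to S_m \to 1$ realizing the centralizer as the wreath product $\mathbb{Z}_t \wr S_m$. The points you flag as potential obstacles all go through: the kernel computation reduces to the fact that the centralizer of a $t$-cycle in $S_t$ has order exactly $t$ (since a $t$-cycle has $(t-1)!$ conjugates in $S_t$), and with the convention $g\alpha_i g^{-1} = \alpha_{\phi(g)(i)}$ one checks $\phi(gh)=\phi(g)\phi(h)$ and $s(\pi\pi')=s(\pi)s(\pi')$ directly, while the order count $t^m\, m!$ matches the classical centralizer formula for a permutation consisting of $m$ disjoint $t$-cycles on $mt$ points.
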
 
 
  
\begin{theorem}
\label{le:whole}
$  diam({\mathcal{P}}^*(A_n))\geq 6$.
 \,\label{le:20}
\end{theorem}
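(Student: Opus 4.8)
The plan is to exhibit a single pair of prime-order vertices whose distance is at least $6$ and to rule out distances $2$ and $4$ using the rigid structure of geodesics provided by Lemma~\ref{le:0}. First I would use Chebyshev's theorem to fix a prime $p$ with $n/2 < p < n$: applying the theorem to $\lfloor n/2\rfloor$ yields a prime $p$ with $\lfloor n/2\rfloor < p \le n$, hence $2p>n$, while $p\ne n$ because $n$ is not prime. Since $p>n/2>2$, the prime $p$ is odd, so a $p$-cycle is an even permutation. I would then set $x=(1,2,\dots,p)$ and $y=(n-p+1,\dots,n)$, two $p$-cycles in $A_n$. Because $2p>n$ their supports satisfy $S(x)\cup S(y)=\{1,\dots,n\}$, and because $p<n$ we have $S(x)\ne S(y)$, so $\langle x\rangle\ne\langle y\rangle$.

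Since $\mathcal{P}^*(A_n)$ is connected and $x,y$ have prime order with $\langle x\rangle\ne\langle y\rangle$, Lemma~\ref{le:0} gives $d(x,y)=2t$ for some positive integer $t$; it then remains to exclude $t=1$ and $t=2$. For $t=1$ the lemma produces an element $x_1$ with $o(x_1)=o(x)o(y)=p^2$; but an element of order $p^2$ requires a cycle of length $p^2>n$, so none exists in $A_n$, and thus $d(x,y)\ne 2$.

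For $t=2$ I would analyze the normal-form path $x=x_0\sim x_1\sim x_2\sim x_3\sim x_4=y$ with $o(x_2)=r$ prime and $o(x_1)=o(x_3)=pr$; here $r\ne p$, since otherwise $o(x_1)=p^2$. Because $x_0$ is a power of $x_1$ and $o(x_0)=p$, the order-$p$ part of $x_1$, namely $x_1^{\,r}$, has support $S(x)$ of size $p$; as a single $pr$-cycle would already contribute $pr>p$ points to that support, $x_1$ has no $pr$-cycle and exactly one $p$-cycle, on $S(x)$, the remaining cycles being $r$-cycles and fixed points. Consequently $x_2\in\langle x_1^{\,p}\rangle$ has support disjoint from $S(x)$. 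The symmetric argument on the $y$-side gives $S(x_2)\cap S(y)=\emptyset$. Hence $S(x_2)\cap(S(x)\cup S(y))=\emptyset$, that is $S(x_2)=\emptyset$, contradicting $o(x_2)=r\ge 2$. Therefore $d(x,y)\ne 4$.

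Combining, $d(x,y)$ is even and different from $2$ and $4$, so $d(x,y)\ge 6$ and $diam(\mathcal{P}^*(A_n))\ge 6$. The main obstacle—and the genuinely new idea—is the choice of two large $p$-cycles whose supports jointly cover $\{1,\dots,n\}$: this is exactly what collapses the only available length-$4$ bridge, since the middle vertex $x_2$ of such a bridge is forced to act only on points outside both $S(x)$ and $S(y)$. The remaining steps are routine once the support bookkeeping for the normal form of Lemma~\ref{le:0} is in place.
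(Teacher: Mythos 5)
Your proposal is correct and follows essentially the same route as the paper: the same pair of $p$-cycles with $n/2<p<n$ whose supports cover $\{1,\dots,n\}$, Lemma~\ref{le:0} to force $d(x,y)=2t$, impossibility of order $p^2$ to kill $t=1$, and the observation that the middle vertex of a length-$4$ path must be supported off $S(x)\cup S(y)$ to kill $t=2$. The only difference is cosmetic: where the paper excludes $t=2$ via the factorization $C_{S_n}(x)\cong\langle x\rangle\times S_{\{p+1,\dots,n\}}$ (forcing $z\in S_{\{1,\dots,n-p\}}\cap S_{\{p+1,\dots,n\}}=1$), you obtain the same conclusion by direct cycle-structure bookkeeping for $x_1$ and $x_3$, which is an equivalent, equally valid computation.
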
 
\begin{proof}   
 By Chebyshev's theorem, there is a prime $p$, such that $[n/2]< p< n$. 
 Suppose $x=(1,\cdots, p)$ and $y=(n, n-1, \cdots, n-p+1)$. By Lemma  ~\ref{le:0}, $ d(x,y)=2t $, where $t$ is a positive integer.
 Since $o(x)=o(y)=p$, $d(x,y)>2$. 
   If $ t=2 $, then there are $ z, z_1 $ and $ z_2 $  in $S_n$  such that $ x\thicksim z_2\thicksim z\thicksim z_1\thicksim y $,  $ o(z_1)= o(z_2)=pq $ and $ o(z)=q $, where $q$ is a prime.   So $z\in C_{S_n}(x)\cap C_{S_n}(y)$.
Since    $C_{S_n}(x)\cong \langle x \rangle\times S_{\{p+1,\cdots, n\}}$, $C_{S_n}(y)\cong \langle y \rangle\times S_{\{ 1,\cdots, n-p\}}$ and $q\nmid p$, we have  $z\in S_{\{ 1,\cdots, n-p\}}\cap S_{\{p+1,\cdots, n\}}=1$,  which is a contradiction.
   Therefore  $d(x,y)\geq 6$. The proof is completed.
 \end{proof} 
  
\begin{lemma}
\label{le:whole}
Let    $p'$ is the maximum prime factor of the $ n(n-1)(n-2)$ and  $ [\frac{n-2}{p' }]\geq 3p'+2 $. Then $  diam({\mathcal{P}}^*(A_n))\leq 8$.
  \,\label{le:23}
\end{lemma}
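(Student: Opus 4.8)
The plan is to build every path out of two primitive moves---Lemma~\ref{le:1} applied to a long $p'$-cycle, and a \emph{root/power} move that trades cycle lengths---after first extracting from the hypothesis the arithmetic fact that does all the work. Since $p'$ is the largest prime dividing $n(n-1)(n-2)$, it divides one of $n$, $n-1$, $n-2$; as $p'$ is odd, a product of disjoint $p'$-cycles is an even permutation, so $A_n$ contains elements that are products of up to $[\tfrac{n-2}{p'}]\ge 3p'+2$ disjoint $p'$-cycles. The key observation is structural: if a prime-order $\gamma$ has at most two fixed points, then $|S(\gamma)|=m\ell\in\{n,n-1,n-2\}$ for its cycle-prime $\ell$, so $\ell\mid n(n-1)(n-2)$ and hence $\ell\le p'$, whence the number of cycles satisfies $m=(n-|S(\gamma)|)/\ell\ge (n-2)/p'\ge 3p'+2$. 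In particular the configuration ``three cycles, at most two fixed points'' that forced the length-$10$ paths in Lemma~\ref{le:8} simply cannot occur, so the bound of Lemma~\ref{le:8} improves to $d(\alpha,\beta)\le 8$ for every pair of prime-order elements; the remaining task is to absorb the reduction to prime order without losing the gain.

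First I would record the two moves. If two elements share at least $p'$ fixed points, a single $p'$-cycle $\zeta$ placed there is coprime to and commutes with both, so by Lemma~\ref{le:1} each is at distance $2$ from $\zeta$. When fixed points are scarce I would instead use the root move: for a prime-order $\gamma$ that is a product of at least $p'$ cycles of equal prime length $p\ (\neq p')$, choose $p'$ of these cycles, interleave their $pp'$ points into a single $(pp')$-cycle $\sigma$, and set $W=\sigma\cdot\prod_{j}\gamma_j^{\,l}$ over the remaining cycles with the powers $l$ chosen so that $W^{p'}=\gamma$ while $W^{p}=\zeta$ is a product of $p$ disjoint $p'$-cycles supported only on the chosen chunk. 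Then $\gamma\sim W\sim\zeta$, so $d(\gamma,\zeta)\le 2$ with $\zeta$ of order $p'$ and of small, freely placeable support. By the structural fact above, every prime-order element either has $\ge p'$ fixed points (use Lemma~\ref{le:1}) or has $\ge 3p'+2$ cycles of a prime $\le p'$ (use the root move); combining this with a single reduction step $z\sim z^{o(z)/p}$ when $o(z)$ is composite shows that every vertex lies within distance $3$ of a small-support order-$p'$ element.

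To finish I would, given arbitrary $x,y$, pass to small-support order-$p'$ elements $\zeta_x,\zeta_y$ and join these through one auxiliary cycle of a third prime $r\notin\{p'\}$ placed on the (now abundant) common fixed points, so that $d(\zeta_x,\zeta_y)\le 4$ by two applications of Lemma~\ref{le:1}. The hard part---and the reason the hypothesis is stated with the explicit slack $3p'+2$ rather than a bare positivity---is the accounting in the case where both $x$ and $y$ have fewer than $p'$ fixed points simultaneously. There Lemma~\ref{le:0} forbids the two distinct order-$p'$ images from being closer than an even distance $\ge 4$, so the naive chain $x\rightsquigarrow\zeta_x\rightsquigarrow\zeta_y\rightsquigarrow y$ threatens a total of ten; the surplus of at least $2p'+2$ cycles guaranteed by the hypothesis is exactly what lets me keep each outer segment down to length $2$ (performing the reduction \emph{inside} the root move rather than as an extra edge) and still reserve enough untouched points for the coprime bridge, bringing the total to $8$. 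Verifying that this bookkeeping closes in every support pattern is the main obstacle, and it is where the inequality $[\tfrac{n-2}{p'}]\ge 3p'+2$ is used to the hilt.
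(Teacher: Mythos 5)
Your opening reductions are sound and in fact match the paper's arithmetic: the observation that a prime-order $\gamma$ with at most two fixed points has cycle length $\ell$ dividing one of $n$, $n-1$, $n-2$, hence $\ell\le p'$ and at least $[\frac{n-2}{p'}]\ge 3p'+2$ disjoint cycles, is exactly the fact the paper exploits (it is why, in its case analysis, ``$k'<3$'' immediately yields $m'\ge 3p+2$), and your remark that the bad configuration ($m'=3$, $k<3$, $k'<3$) of Lemma~\ref{le:8} is vacuous under the hypothesis is correct. But two steps in your plan fail. First, your dichotomy ``every prime-order element either has $\ge p'$ fixed points (place a $p'$-cycle) or has $\ge 3p'+2$ cycles of a prime $\le p'$ (root move)'' is false: the structural fact only triggers at $\le 2$ fixed points, so an element such as a single $r$-cycle, with $r$ a large prime not dividing $n(n-1)(n-2)$ and with between $3$ and $p'-1$ fixed points, falls into neither class --- too few fixed points to host a $p'$-cycle, too few cycles for the root move. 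This is precisely why the paper's bridging elements are cycles of \emph{small} prime order ($2$, $3$, $5$), which need only $3$--$5$ spare points to place; its case split is $k\ge 3$ versus $k<3$, never $k\ge p'$.

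Second, the count $3+4+3=10$ versus the target $8$ is a real obstruction, and your proposed fix --- ``performing the reduction inside the root move'' to compress each outer segment to length $2$ --- is asserted rather than proved, and you concede as much (``verifying that this bookkeeping closes in every support pattern is the main obstacle''). For composite $o(x)$ there is no evident single element adjacent to $x$ (necessarily a power or a root of $x$) whose further power is a small-support element of order $p'$: powers of $x$ have support inside $S(x)$ and order dividing $o(x)$, which need not involve $p'$ at all, while roots of $x$ must respect the full mixed-length cycle structure of $x$, not just that of $x^{o(x)/p}$. The paper sidesteps this entirely: it spends one honest edge on each reduction $x\thicksim x^{o(x)/p}$, $y\thicksim y^{o(y)/q}$, and then proves the stronger statement that the two prime-order reductions $\alpha,\beta$ satisfy $d(\alpha,\beta)\le 6$ --- via a $3$-cycle (or a short interleaved-power substitute) at distance $2$ from $\alpha$, a small-support involution at distance $2$ from $\beta$ built by interleaving four spare cycles of $\beta$ (this is where $m'\ge 3p+2$, i.e.\ the slack in $[\frac{n-2}{p'}]\ge 3p'+2$, enters), and Lemma~\ref{le:1} joining the two bridges. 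Until you either repair the dichotomy and actually establish the distance-$2$ compression, or instead prove a distance-$6$ bound between the prime-order reductions as the paper does, your argument yields only $d(x,y)\le 10$, not $8$.
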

\begin{proof} 
  Let $ x, y\in A_n $.  
Suppose $o(x)$ and $o(y)$  are not   primes,  and let
$p$  and $q$ are at least prime factors of $o(x)$  and  $o(y)$, respectively.
Then $ x\thicksim x^{o(x)/p}  $ and $ y\thicksim y^{o(y)/q}  $.   
 Now we let $x^{o(x)/p}=\alpha  =\alpha_1 \cdots \alpha_m $ and $y^{o(y)/q}=\beta=\beta_1 \cdots \beta_{m'}$. Assume that $ k=n-|S(\alpha)| $,   $ k'=n-|S(\beta)| $  and $p\leq q$.
If $pq\leq 9$, then $d(\alpha, \beta)\leq6$,  by Lemma ~\ref{le:3}. Therefore  $ d(x, y)\leq 8 $. \\
For  $pq>9$, we consider four cases as follows:\\  
{\bf Case 1:} Let $ p\neq q $, $ p\geq  5$ and $ q \geq 5$. 
\begin{itemize}
\item[$(1.1)$]
 If  $ k \geq 3 $, $ k'\geq 3 $ and $n-|S(\alpha)\cup S(\beta)|\geq 3$,
there is the $3$-cycle $c $  at distance $2$ from both $\alpha$ and  $\beta$, where $S(c)\subset S^c (\alpha)\cap S^c (\beta) $. Therefore  $ d(\alpha, \beta)\leq 4 $.
\item[$(1.2)$]
If  $ k \geq 3 $, $ k'\geq 3 $ and $n-|S(\alpha)\cup S(\beta)|< 3$, then $ mp+m'q>p'(3p'+2)$, thus $m+m'>3p'+2\geq 17$, accordingly $m\geq 7$ or $m'\geq 7$.
    Let $m'\geq 7$. 
    Since $ k \geq 3 $, there is the $3$-cycle $c $   with $d(\alpha, c)=2$.
    Since $m'\geq 7$,    there are   $x$, $\beta_{j_1}$, $\beta_{j_2}$,  $\beta_{j_3}$ and $\beta_{j_4}$ such that $S(c)\cap S(\beta_{j_1}\cdots\beta_{j_4})=\emptyset$, $d(x, \beta)=2$   and $S(x)=S(\beta_{j_1} \cdots \beta_{j_4})$, where $o(x)=2$.
   By Lemma ~\ref{le:1},   $ d(c, x)=2$.  Thus $ d(\alpha, \beta)\leq 6 $.
\item[$(1.3)$]
 If $k\geq3$ and $k'<3$, then $m'>7$. 
 Since $ k \geq 3 $, there is  a $3$-cycle $c $   with $d(\alpha, c)=2$.
Since $m'\geq 7$,  there is $y$ in $A_n$    such that   $d(\beta,y)=2$  and $S(y)\cap S(c)=\emptyset$, where $o(y)=2$. By Lemma ~\ref{le:1},  $d(c, y)=2$. Therefore  $ d(\alpha, \beta)\leq 6 $.
\item[$(1.4)$] If $k<3$ and $k'\geq 3$, then  the proof is similar to   $(1.3)$.
\item[$(1.5)$]
 If $k<3$ and $k'<3$, then  there are $\alpha_{i_1}$, $\alpha_{i_2}$, $\alpha_{i_3}$, $\beta_{j_1}$ and $\beta_{j_2}$ such that  $S(\alpha_{i_1})\cap S(\beta_{j_1})\neq\emptyset$, $|S(\alpha_{i_2})\cap S(\beta_{j_1}\beta_{j_2})|\geq 2$ and $S(\alpha_{i_3})\cap S(\beta_{j_2})\neq\emptyset$.
Also, there is $f$ in $A_n$  such that $d(f,\alpha)=2$ and $S(f)= S(\alpha_{i_1}\alpha_{i_2}\alpha_{i_3})$, where $o(f)=3$.  
 Hence  
     $S(f)\subset S(\beta_{j_1}\cdots \beta_{j_t})\cup S^c(\beta)$ and $t\leq 3p-2$. Since $m'\geq 3p+2$,  $m'-t\geq4$, thus there is $g $  in $A_n$  such that $d(\beta, g)=2$, 
       $S(g )=S(\beta_{j_{t+1}}\cdots\beta_{j_{t+4}})$, where $o(g)=2$.
Since $S(f)\cap S(g)=\emptyset$,     $d(f, g)=2$, by Lemma ~\ref{le:1}.   Therefore  $ d(\alpha, \beta)\leq 6 $. Consequently $d(x,y)\leq 8$.
\end{itemize}
The proof of the following cases is similar to  the previous case.\\  
{\bf Case 2:} Let $ p=q $ and $ p\geq 5 $. \\
{\bf Case 3:} Let $ q\geq 5 $ and $ p=3 $.\\
 {\bf Case 4:} Let $ q\geq 5 $  and $ p=2 $.  \\
 Suppose   $o(x)$ and $o(y)$  are prime.  
 As proved above  $d(x, y)\leq 6$. 
  Let $o(x)$ is a prime and $o(y)$  is not a prime. Then $y\sim y^{o(y)/q} $ and let  $y^{o(y)/q}=\beta=\beta_1 \cdots \beta_{m'}$, where $q$ is the least prime factor    $o(y)$. 
 As proved above $d(x, \beta)\leq 6$. Consequently $d(x,y)\leq 8$.
 The proof is completed.
\end{proof}
 \begin{example}
For $n= $  
$2025, 2432, 3250, 5292, 7106, 7569, 9802$, $ 10621$, $10880$, $10881$, $11286$, $11440$, $11662$, $13312$, $13456$, $13690$ and  $14337$,    
$  diam({\mathcal{P}}^*(A_n))\leq 8$.
 \end{example}
   
\begin{theorem}
Let   ${\mathcal{P}}^*(A_n)$  is connected and $p$ is the maximum prime factor of the $ n(n-1)(n-2)$.  Then   
\begin{itemize}
\item[$ (1) $]  If   $ [\frac{n-2}{p }]\geq 3p+2 $, then $6\leq diam({\mathcal{P}}^*(A_n))\leq 8$.
\item[$(2)$]  
    Otherwise, $6\leq diam({\mathcal{P}}^*(A_n))\leq 11$.
\end{itemize}
\end{theorem}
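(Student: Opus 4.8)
The plan is to assemble the statement from three results already established, observing first that the lower bound $6 \leq diam(\mathcal{P}^*(A_n))$ is common to both parts and follows directly from Theorem~\ref{le:20}, whose only standing hypothesis is the connectedness of $\mathcal{P}^*(A_n)$. That proof produces, via Chebyshev's theorem, two $p$-cycles $x=(1,\cdots,p)$ and $y=(n,\cdots,n-p+1)$ on disjoint supports with $[n/2]<p<n$, and rules out $d(x,y)=4$ by a centralizer argument, forcing $d(x,y)\geq 6$. Since connectedness is assumed in the present statement, this lower bound holds unconditionally in both cases.

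For the upper bound in part $(1)$, I would note that the hypothesis $[\frac{n-2}{p}]\geq 3p+2$ is literally the hypothesis of Lemma~\ref{le:23}, because the prime $p$ here---the largest prime dividing $n(n-1)(n-2)$---coincides with the prime denoted $p'$ there. Thus Lemma~\ref{le:23} applies verbatim and yields $diam(\mathcal{P}^*(A_n))\leq 8$; pairing this with the lower bound gives $6\leq diam(\mathcal{P}^*(A_n))\leq 8$. For part $(2)$, when the inequality $[\frac{n-2}{p}]\geq 3p+2$ fails, I would fall back on the general bound of Corollary~\ref{le:19}, which gives $diam(\mathcal{P}^*(A_n))\leq 11$ under the sole assumption that $\mathcal{P}^*(A_n)$ is connected. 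Again combining with Theorem~\ref{le:20} yields $6\leq diam(\mathcal{P}^*(A_n))\leq 11$.

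The substance of the argument is therefore entirely contained in the preceding lemmas, so no genuine obstacle remains at this stage; the statement is essentially a repackaging of Theorem~\ref{le:20}, Lemma~\ref{le:23}, and Corollary~\ref{le:19}. The only point requiring care is bookkeeping: verifying that $p=p'$ so the case hypothesis aligns exactly with Lemma~\ref{le:23}, and confirming that connectedness---the shared running assumption throughout the paper---suffices to invoke both Theorem~\ref{le:20} and Corollary~\ref{le:19}. If anything is delicate it lies upstream, in checking that the short-path constructions of Lemmas~\ref{le:3} and \ref{le:8}, which feed into Corollary~\ref{le:19} and Lemma~\ref{le:23}, exhaust every configuration of prime-order reductions $x\sim x^{o(x)/p}$ and $y\sim y^{o(y)/q}$; but those verifications have already been carried out.
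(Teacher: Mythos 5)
Your proposal is correct and matches the paper's own proof exactly: the theorem is proved there by citing Theorem~3.5 (the lower bound $6$), Lemma~3.7 (the bound $8$ under the hypothesis $[\frac{n-2}{p}]\geq 3p+2$), and Corollary~3.1 (the general bound $11$), precisely as you assemble them. Your added bookkeeping remarks (that $p$ coincides with the $p'$ of Lemma~3.7 and that connectedness suffices for the cited results) are sound and only make explicit what the paper leaves implicit.
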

\begin{proof} 
By Theorem ~\ref{le:20}, Corollary ~\ref{le:19},  and Lemma  ~\ref{le:23},
 the proof is completed. \\
  \end{proof}
 
%
\hspace*{-7mm}
{\bf Acknowledgements}\\
We would like to   show our  gratitude to the referee for their careful review of our manuscript and some helpful suggestions.


\end{document}